\newtheorem{thrm}{Theorem}[section]
\theoremstyle{definition}
\newtheorem{remark}[thrm]{Remark}
\numberwithin{equation}{section}
\email{hzoubeir2014@gmail.com}
\begin{document}
\address{ }
\author{Hicham Zoubeir}
\address{Ibn Tofail University, Department of Mathematics,\\
Faculty of Sciences, P. O. B : $133,$ Kenitra, Morocco.}
\title[Existence of Gevrey solutions to some polynomially nonlinear FDE]{%
Existence of Gevrey solutions to some polynomially nonlinear functional
differential equations}

\begin{abstract}
Our aim in this paper is to prove, under some growth conditions on the
datas, the solvability in a Gevrey class of a polynomially nonlinear
functional differential equation.
\end{abstract}

\dedicatory{$\emph{This}$ $\emph{modest}$ $\emph{work}$ $\emph{is}$ $\emph{%
dedicated}$ $\emph{to}$ $\emph{the}$ $\emph{memory}$ $\emph{of}$ $\emph{our}$
$\emph{beloved}$ $\emph{master}$ $\emph{Ahmed}$ $\emph{Intissar}$ $\emph{%
(1951-2017),}$ $\emph{a}$ $\emph{distinguished}$ $\emph{professor,}$ $\emph{a%
}$ $\emph{brilliant}$ $\emph{mathematician,}$ $\emph{a}$ $\emph{man}$ $\emph{%
with}$ $\emph{a}$ $\emph{golden}$ $\emph{heart.}$\emph{\ }}
\subjclass[2010]{ 30D60, 34K05.}
\keywords{Gevrey classes, Functional differential equations. }
\maketitle

\section{Introduction}

Functional differential equations (FDE) are differential equations in which
the derivative of the unknown function at a certain time is given in terms
of the values of the function at times which are functions of this time. FDE
are of great interest in many areas of applied sciences where processes have
an aftereffect or a delayed effect phenomena in their inner dynamics. FDE
have many applications in the theory of automatic control, the problems of
rocket motion, the problems of economical planning, the theory of population
dynamics, the study of cell cycle, the study of blood cell dynamics, the
study of infectious disease dynamics...etc. The vast literature (cf. for
example references below) concerning FDE reflects the increasingly rapid
development of this branch of mathematical analysis. The lines of research
on FDE are also various: solvability of FDE in H\"{o}lder functions,
analytic functions or entire functions, existence of periodic solutions to
FDE, existence of almost-periodic solutions to FDE, boundary value problems
for FDE or sytems of FDE, stability for FDE, theory of functional
differential inclusions,\ theory of iterative FDE, theory of stochastic FDE,
FDE with fractional derivatives, numerical approximation of solutions to
FDE, oscillatory properties of FDE, bifurcation theory of FDE.etc. Our aim
in this paper is to prove, under some regularity conditions on the datas,
the solvability in a Gevrey class $G_{k}([-1,1])$ of a nonlinear FDE of the
form $y^{\prime }(x)=a(x)P(y(\psi (x))+b(x)$ under the initial condition $%
y(d)=c$ where $a,$ $b$ and $\psi $ are holomorphic functions on some
neighborhood in $%
\mathbb{C}
$\ of $[-1,1]$, $P$ a polynomial function with real coefficients and degree
at least $2$ and $d\in \lbrack -1,1]$ and $c\in 
\mathbb{R}
$ are given numbers. The reason of our interest in Gevrey classes is firstly
their extrem usefulness in the study of problems of mathematical physics,
then the few number of published works on the solvability of FDE in these
classes of functions. Our approach relies upon basic results of functional
and complex analysis.

\section{Preliminary Notes}

Let $S$ be a nonempty subset of $%
\mathbb{C}
$ and $f:S\rightarrow 
\mathbb{C}
$ a bounded function. $\Vert f\Vert _{\infty ,S}$ denotes the quantity :%
\begin{equation*}
\Vert f\Vert _{\infty ,S}:=\underset{u\in S}{\sup }|f(u)|
\end{equation*}

For every $z\in 
\mathbb{C}
,$ we denote by $\varrho (z,S)$ the quantity :%
\begin{equation*}
\varrho (z,S):=\underset{\zeta \in S}{\inf }|z-\zeta |
\end{equation*}

For each $z\in 
\mathbb{C}
$ we denote by $\widehat{z}$ the closest point of $\left[ -1,1\right] $ to
the point $z.$

$\underline{0}_{S}$ denotes the function defined on $S$ by :%
\begin{equation*}
\underline{0}_{S}(z)=0,\text{ }z\in S
\end{equation*}

Let be $x,y\in 
\mathbb{R}
,$ we set :%
\begin{equation*}
x\vee y:=\max (x,y),\text{ }x\wedge y:=\min (x,y)
\end{equation*}

Let be $p$, $q$ $\in 
\mathbb{Z}
$, we denote by $\overline{p,q\text{ }}$ \ the set :%
\begin{equation*}
\overline{p,q\text{ }}:=\{j\in 
\mathbb{Z}
:p\leq j\leq q\}
\end{equation*}

For every function $g$ whose domain of definition contains $[-1,1]$ and
whose restriction to $[-1,1]$ is continuous on $[-1,1]$, we set :%
\begin{equation*}
||g||_{1}:=\overset{1}{\underset{-1}{\int }}|g(t)|dt
\end{equation*}

For $z\in 
\mathbb{C}
$ and $h>0$, $B(z,h)$ is the open ball in $%
\mathbb{C}
$ with center $z$ \ and radius $h$.

For $z_{1},z_{2}\in 
\mathbb{C}
$, we denote by $\underrightarrow{z_{1},z_{2}}$ the linear path joining $%
z_{1}$ to $z_{2}.$

For $r,k,A\in ]0,+\infty \lbrack $ , and $n\in 
\mathbb{N}
^{\ast }$, we set for every nonempty interval $I$ of $%
\mathbb{R}
:$%
\begin{equation*}
I_{r}:=I+B(0,r),\text{ }I_{k,A,n}:=I+B(0,An^{\frac{-1}{k}})
\end{equation*}

$C^{0}([-1,1])$ $($resp.$C^{\infty }([-1,1]))$ denotes the set of
real-valued continuous functions on $[-1,1]$ $($resp.the set of real-valued
functions of class $C^{\infty }$ on $[-1,1]).$

For $r\geq 0$ and $f\in C^{0}([-1,1])$ we denote by $\overline{\Delta }%
_{\infty }(f,r$ $)$ the closed ball in the Banach space $%
(C^{0}([-1,1]),||.||_{\infty ,[-1,1]})$ of center $f$ \ and of radius $r$.

Let $E$ be a nonempty subset of $%
\mathbb{C}
.$ By $O(E$ $)$ we denote the set of holomorphic functions on some
neighborhood of $E$.

Along this paper $k>0$ is a given real number. The Gevrey class $%
G_{k}([-1,1])$ is the set of all functions $f$ \ of class $C^{\infty }$on $%
[-1,1]$ such that there exists a constant $B>0$ such that :%
\begin{equation*}
||f^{\text{ }(n)}||_{\infty ,[-1,1]}\leq B^{n+1}n^{n(1+\frac{1}{k})},n\in 
\mathbb{N}%
\end{equation*}%
with the convention that $0^{0}=1.$

A holomorphic function $\varphi $ on a neighborhood $[-1,1]_{r}$ of $[-1,1]$
is said to verify the $E(k)$ property if there exists a constant $\tau
_{\varphi }\in ]0,r]$ depending only on $\varphi $ such that for all $A$ in $%
]0,\tau _{\varphi }]$ there exists an integer $N(A)\geq 1$ depending only on 
$A$ such that for every positive integer $p\geq N(A)$, we have :%
\begin{equation*}
\varphi ([-1,1]_{k,A,p+1})\subset \lbrack -1,1]_{k,A,p}
\end{equation*}

The real number $\tau _{\varphi }$ will be called a $k-$threshold of $%
\varphi $.

\begin{remark}
Since 
\begin{equation*}
\underset{p\text{ }\geq \text{ }N}{\cap }[-1,1]_{k,A,p+1}=[-1,1],\text{ }k>0,%
\text{ }A>0,\text{ }N\in 
\mathbb{N}
^{\ast }
\end{equation*}%
it follows that the following inclusion $:$%
\begin{equation*}
\varphi ([-1,1]\subset \lbrack -1,1]
\end{equation*}%
holds for every holomorphic function $\varphi $ on $[-1,1]$ which verifies
the $E(k)$ property.
\end{remark}

\begin{remark}
Let be $\varphi $ a holomorphic function on a neighborhood of $[-1,1]$ such
that $\varphi $ verifies the $E(k)$ property. Thence we have $:$%
\begin{equation*}
\varphi ([-1,1]_{k,A(2N(A))^{-\frac{1}{k}},p+1})\subset \lbrack
-1,1]_{k,A(2N(A))^{-\frac{1}{k}},p}\text{ },\text{ }p\in 
\mathbb{N}
^{\ast },\text{ }A\in ]0,\tau _{\varphi }[
\end{equation*}%
It follows that for every $A\in ]0,\tau _{\varphi }[$ there exists $B\in
]0,A[$ such that $:$ 
\begin{equation*}
\varphi ([-1,1]_{k,B,p+1})\subset \lbrack -1,1]_{k,B,p}\text{ },\text{ }p\in 
\mathbb{N}
^{\ast }
\end{equation*}
\end{remark}

Through this paper $a,b$ and $\psi $ are holomorphic functions on some
neighborhood $[-1,1]_{\mu }$ of $[-1,1]$ such that $\psi $ verifies the $%
E(k) $ property, and $d\in \lbrack -1,1]$ and $c\in 
\mathbb{R}
$ are given numbers . $P$ is a polynomial function with real coefficients
and degree $N_{0}\geq 2:$%
\begin{equation*}
P:x\mapsto \underset{j=0}{\overset{N_{0}}{\sum }}a_{j}x^{j}
\end{equation*}%
We associate to $P$ \ the polynomial function $\underline{P}$ defined by :%
\begin{equation*}
\underline{P}:x\mapsto \underset{j=1}{\overset{N_{0}}{\sum }}|a_{j}|x^{j}
\end{equation*}

The following \ result is a direct consequence of the fact that the
polynomial function $\underline{P}^{\text{ }\prime }$is strictly increasing
on $%
\mathbb{R}
^{+}$.

\begin{proposition}
Assume that $:$%
\begin{equation}
||a||_{1}\underline{P}^{\text{ }\prime }(0)<1  \label{(1)}
\end{equation}%
Then the equation $:$%
\begin{equation*}
||a||_{1}\underline{P}^{\text{ }\prime }(r)=1
\end{equation*}%
\ has a unique strictly positive root $\theta (P,a)$.
\end{proposition}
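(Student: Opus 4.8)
The plan is to recast the statement as a one-dimensional monotonicity-plus-intermediate-value argument. First I would introduce the function
\[
\Phi : r \longmapsto ||a||_{1}\,\underline{P}^{\text{ }\prime }(r)
\]
on $[0,+\infty \lbrack $ and observe that proving the proposition amounts to showing that $\Phi $ attains the value $1$ at exactly one strictly positive point, which will be named $\theta (P,a)$.

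Next I would collect the three elementary facts that drive the argument. Since $\underline{P}^{\text{ }\prime }(x)=\sum_{j=1}^{N_{0}}j|a_{j}|x^{j-1}$, we have $\underline{P}^{\text{ }\prime }(0)=|a_{1}|$, so the hypothesis reads $\Phi (0)=||a||_{1}|a_{1}|<1$. Because $P$ has degree $N_{0}\geq 2$, its leading coefficient satisfies $|a_{N_{0}}|>0$ and the exponent $N_{0}-1\geq 1$, so $\underline{P}^{\text{ }\prime }(r)\to +\infty $, hence $\Phi (r)\to +\infty $ as $r\to +\infty $ (this already uses the tacit non-degeneracy $||a||_{1}>0$, without which the equation would have no solution). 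Finally, differentiating once more gives $\underline{P}^{\text{ }\prime \prime }(x)=\sum_{j=2}^{N_{0}}j(j-1)|a_{j}|x^{j-2}>0$ for $x>0$, the index-$N_{0}$ term being strictly positive; this is exactly the assertion, quoted just before the proposition, that $\underline{P}^{\text{ }\prime }$ is strictly increasing on $\mathbb{R}^{+}$, and multiplication by the positive constant $||a||_{1}$ shows that $\Phi $ is continuous and strictly increasing on $[0,+\infty \lbrack $.

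With these facts in hand the conclusion is immediate. Since $\Phi $ is continuous with $\Phi (0)<1$ and $\lim_{r\to +\infty }\Phi (r)=+\infty $, the intermediate value theorem yields a point $\theta (P,a)>0$ with $\Phi (\theta (P,a))=1$, and the strict monotonicity of $\Phi $ forces this point to be unique. I expect no real obstacle in this proof; the only step deserving a moment's care is the divergence $\Phi (r)\to +\infty $, which is precisely where the assumption $N_{0}\geq 2$ (and the resulting positivity of the leading coefficient) and the implicit condition $||a||_{1}>0$ are genuinely used.
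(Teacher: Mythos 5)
Your argument is correct and is exactly the route the paper intends: the paper offers no written proof beyond the remark that the proposition is a direct consequence of $\underline{P}^{\prime}$ being strictly increasing on $\mathbb{R}^{+}$, and your write-up simply fills in the intermediate value theorem, the divergence of $\Vert a\Vert_{1}\underline{P}^{\prime}(r)$ as $r\to+\infty$ (using $N_{0}\geq 2$), and uniqueness from strict monotonicity. Your observation that the statement tacitly requires $\Vert a\Vert_{1}>0$ is a fair and worthwhile remark, since condition (\ref{(1)}) alone does not exclude $a\equiv 0$.
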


\section{Main Result}

Our main result is the following.

\begin{theorem}
Assume in addition to condition (\ref{(1)}) that $:$%
\begin{equation}
0<||b+P\mathit{\ }(0)a||_{1}+|c|<\theta (P,a)\mathit{\ }-\frac{\underline{P}%
^{\text{ }}(\theta (P,a))}{\underline{P}^{\text{ }\prime }(\theta (P,a))}
\label{(2)}
\end{equation}%
Then the FDE $:$%
\begin{equation}
y^{\prime }(x)=a(x)P(y(\psi (x))+b(x)  \label{(3)}
\end{equation}%
has a solution $u_{d,c}$ which belongs to the Gevrey class $G_{k}([-1,1])$\
and verifies the initial condition $:$%
\begin{equation}
y(d)=c  \label{(4)}
\end{equation}
\end{theorem}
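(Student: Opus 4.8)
The plan is to recast the initial value problem \eqref{(3)}--\eqref{(4)} as a fixed point equation. Setting
\[
(Tf)(x):=c+\overset{x}{\underset{d}{\int }}\left[ a(t)P(f(\psi (t)))+b(t)\right] dt,
\]
a function solves the problem precisely when it is a fixed point of $T$. First I would run $T$ on the closed ball $\overline{\Delta }_{\infty }(\underline{0}_{[-1,1]},r)$ of $(C^{0}([-1,1]),\Vert \cdot \Vert _{\infty ,[-1,1]})$. Using $\psi ([-1,1])\subset \lbrack -1,1]$ (Remark $1$), $|P(z)-P(0)|\leq \underline{P}(|z|)$ and $|P(u)-P(v)|\leq \underline{P}^{\prime }(r)|u-v|$ for $|u|,|v|\leq r$, one gets the self-mapping estimate $\Vert Tf\Vert _{\infty }\leq |c|+\Vert b+P(0)a\Vert _{1}+\Vert a\Vert _{1}\underline{P}(r)$ and the Lipschitz constant $\Vert a\Vert _{1}\underline{P}^{\prime }(r)$. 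Introducing $g(r):=r-\Vert a\Vert _{1}\underline{P}(r)$, note that $g^{\prime }(r)=1-\Vert a\Vert _{1}\underline{P}^{\prime }(r)$ vanishes exactly at $\theta (P,a)$ (the root from the Proposition), so $g$ is maximized there with $g(\theta (P,a))=\theta (P,a)-\underline{P}(\theta (P,a))/\underline{P}^{\prime }(\theta (P,a))$. Hypothesis \eqref{(2)} states precisely that $|c|+\Vert b+P(0)a\Vert _{1}$ lies strictly below this maximum, so by continuity of $g$ I can choose $r^{\ast }<\theta (P,a)$ with $g(r^{\ast })\geq |c|+\Vert b+P(0)a\Vert _{1}$; then $T$ maps the ball of radius $r^{\ast }$ into itself and is a contraction there, since $\Vert a\Vert _{1}\underline{P}^{\prime }(r^{\ast })<\Vert a\Vert _{1}\underline{P}^{\prime }(\theta (P,a))=1$ by \eqref{(1)} and monotonicity of $\underline{P}^{\prime }$.

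The contraction already yields a unique continuous fixed point, but the real content is Gevrey regularity, and for this I would not work in $C^{0}$ alone. Instead I would fix constants $B>0$ and consider the set
\[
K:=\left\{ f\in C^{\infty }([-1,1]):\Vert f\Vert _{\infty ,[-1,1]}\leq r^{\ast },\ \Vert f^{(n)}\Vert _{\infty ,[-1,1]}\leq B^{n+1}n^{n(1+\frac{1}{k})}\ \forall n\in
\mathbb{N}
\right\} .
\]
This set is convex and, crucially, closed in $(C^{0}([-1,1]),\Vert \cdot \Vert _{\infty ,[-1,1]})$: a uniform limit of functions in $K$ has, for each $n$, uniformly bounded $(n+1)$-th derivatives, hence equicontinuous $n$-th derivatives, so by the Arzel\`a--Ascoli theorem all derivatives converge uniformly along a subsequence and the limit inherits the same bounds. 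Since $K$ is a closed subset of the Banach space $C^{0}([-1,1])$ and $T$ is a $\Vert \cdot \Vert _{\infty ,[-1,1]}$-contraction on $\overline{\Delta }_{\infty }(\underline{0}_{[-1,1]},r^{\ast })\supset K$, the Banach fixed point theorem will produce the solution inside $K\subset G_{k}([-1,1])$, \emph{provided} I can verify $T(K)\subset K$. The theorem thus reduces to this invariance.

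The main obstacle is exactly the preservation of the Gevrey derivative bound under $T$, that is, controlling the Gevrey constant of $a\cdot P(f\circ \psi )+b$ and of its antiderivative. Here the $E(k)$ property enters decisively. After shrinking the threshold via Remark $2$ so that $\psi (\lbrack -1,1]_{k,A,p+1})\subset \lbrack -1,1]_{k,A,p}$ holds for all $p\geq 1$ and all small $A$, I would holomorphically extend the relevant functions to the shrinking neighborhoods $\Omega _{p}=[-1,1]_{k,A,p}$; the containment $\psi (\Omega _{p+1})\subset \Omega _{p}$ lets the integral operator act within the scale, bounding $\Vert f\circ \psi \Vert _{\infty ,\Omega _{p+1}}\leq \Vert f\Vert _{\infty ,\Omega _{p}}$. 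Integrating along a path consisting of a real segment in $[-1,1]$ followed by a vertical segment of length $\leq A p^{-1/k}$, the horizontal part reproduces the $\Vert a\Vert _{1}\underline{P}(r^{\ast })+\Vert b+P(0)a\Vert _{1}$ budget while the vertical correction is $O(A p^{-1/k})$; choosing $A$ small keeps the supremum norms over $\Omega _{p}$ \emph{bounded} (not merely geometric) in $p$, which is the point where the degree-$N_{0}$ nonlinearity must be tamed so that $P(f\circ \psi )$ does not produce doubly-exponential growth. A Cauchy estimate on the ball $B(x,A p^{-1/k})\subset \Omega _{p}$ at $x\in \lbrack -1,1]$ then gives $|\,\cdot ^{(m)}(x)|\leq m!\,(A p^{-1/k})^{-m}\sup _{\Omega _{p}}|\cdot |$, and taking $p\sim m$ converts the bounded neighborhood norms into the order $B^{m+1}m^{m(1+\frac{1}{k})}$ via $m!\,m^{m/k}\asymp m^{m(1+\frac{1}{k})}e^{-m}$. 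Tracking the product and composition constants so that the resulting $B$ can be chosen uniformly (independently of $m$) is the delicate bookkeeping that closes the argument and establishes $T(K)\subset K$.
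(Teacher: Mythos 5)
Your first two steps (recasting the problem as a fixed point of $T$ and applying the Banach fixed point theorem on a sup-norm ball of radius $r^{\ast }<\theta (P,a)$) are correct and essentially identical to the paper's Subsections 4.1--4.2; choosing the radius via the maximum of $g(r)=r-\Vert a\Vert _{1}\underline{P}(r)$ is a harmless variant of the paper's localisation of the roots of $(\Im )$. The genuine gap is in the Gevrey step, which is the heart of the theorem. You reduce everything to the invariance $T(K)\subset K$, where $K$ is cut out by real-variable Gevrey bounds with a fixed constant $B$, and you never prove this invariance --- you explicitly defer it as ``delicate bookkeeping''. Worse, the sketch you offer for it cannot be repaired as stated: an element $f\in K$ is only of Gevrey class with exponent $1+\frac{1}{k}>1$, hence in general nowhere real-analytic, so it admits no holomorphic extension to any of the neighborhoods $\Omega _{p}=[-1,1]_{k,A,p}$; the inequality $\Vert f\circ \psi \Vert _{\infty ,\Omega _{p+1}}\leq \Vert f\Vert _{\infty ,\Omega _{p}}$, on which your whole estimate rests, is meaningless for such $f$. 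If instead you try to prove $T(K)\subset K$ by purely real-variable means (Fa\`{a} di Bruno through $f\circ \psi $ and the degree-$N_{0}$ products in $P$), the accounting does not close with a fixed $B$: differentiating $a\cdot P(f\circ \psi )+b$ to order $m$ degrades the Gevrey constant by a factor $\lambda ^{m}$ with $\lambda >1$ (coming from $\psi ^{\prime }$ and the products), an exponential loss, while the single integration in $T$ gains only one factor of size about $m^{-(1+\frac{1}{k})}$, a polynomial gain; so a ball with fixed Gevrey constant has no reason to be invariant under $T$, and your fixed-point scheme in $K$ collapses.

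The paper avoids this trap by applying your ``bounded on every $\Omega _{p}$, then Cauchy estimate'' mechanism not to arbitrary members of a Gevrey ball but to the Picard iterates themselves: $F_{1}=0$ and $F_{n+1}=T(F_{n})$ are automatically holomorphic on the shrinking neighborhoods $[-1,1]_{k,\nu ,n}$, because the $E(k)$ property $\psi ([-1,1]_{k,\nu ,n+1})\subset \lbrack -1,1]_{k,\nu ,n}$ lets each integration step be performed one level down the scale. It then (i) proves the uniform sup-norm bounds $F_{n}([-1,1]_{k,s_{1},n})\subset \lbrack -r_{0},r_{0}]_{k,Cs_{1},n}$ by induction using the auxiliary recursion $\omega _{n,s}$, (ii) shows that the differences $g_{n}=F_{n+1}-F_{n}$ decay geometrically, $\Vert g_{n}\Vert _{\infty ,[-1,1]_{k,s_{2},n+1}}\leq C_{0}\delta ^{n}$ with $\delta =\Lambda (Cs_{2})<1$, and (iii) only then converts sup bounds into derivative bounds via Cauchy's inequalities on balls of radius $\frac{1}{2}s_{2}n^{-\frac{1}{k}}$, where the factor $\delta ^{n}$ absorbs the blow-up $n^{\frac{p}{k}}$ through $\sup_{x\geq 0}(\sqrt{\delta }^{x}x^{\frac{p}{k}})<\infty $, after which the series of derivatives is summed. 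Note that no single function in this argument is required to be holomorphic on all the $\Omega _{p}$ simultaneously: each $g_{n}$ lives on its own neighborhood, and the Gevrey bound for $u_{d,c}$ emerges only in the limit. If you insist on a fixed-point formulation, the invariant set would have to be defined by the existence of holomorphic extensions with controlled sup norms along the scale $([-1,1]_{k,s,n})_{n}$ (such a set is closed under uniform limits by Montel's theorem), not by real-variable Gevrey bounds; as written, your argument is missing precisely the step where the theorem's difficulty lies.
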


\section{Proof of the main result}

We subdivide the proof of the main result in three steps.

\subsection{\textbf{Localisation of positive solutions of the equation}$\
(\Im )$ $r=\Vert a\Vert _{1}\protect \underline{P}^{\text{ }}(r)+\Vert
b+P(0)a\Vert _{1}+|c|$ \ }

The study of the variations on $[0,+\infty \lbrack $ of the function $%
H:t\mapsto \Vert a\Vert _{1}\underline{P}^{\text{ }}(r)+\Vert b+P(0)a\Vert
_{1}+|c|-r$ \ shows that the function $H$ \ is strictly decreasing on $%
[0,\theta (P,a)]$ and strictly increasing on $[\theta (P,a),+\infty \lbrack
. $ On the other hand we have by virtue of the condition (\ref{(2)}) :%
\begin{eqnarray*}
H(\theta (P,a)) &=&\Vert a\Vert _{1}\underline{P}^{\text{ }}(\theta
(P,a))+\Vert b+P(0)a\Vert _{1}+|c|-\theta (P,a) \\
&=&\Vert b+P(0)a\Vert _{1}+|c|-[\theta (P,a)\mathit{\ }-\frac{\underline{P}^{%
\text{ }}(\theta (P,a))}{\underline{P}^{\text{ }\prime }(\theta (P,a))}]<0
\end{eqnarray*}%
Since $H(0)>0,$ it follows that the equation $(\Im )$ has exactly in the
interval $[0,+\infty \lbrack $ two roots $r_{0}<r_{1}$ and we have :%
\begin{equation*}
0<r_{0}<\theta (P,a)<r_{1}
\end{equation*}

\subsection{\textbf{Proof \ of the existence in }$C^{\infty }([-1,1])$ 
\textbf{of a solution }$u_{d,c}$\textbf{\ of the problem (\protect \ref{(3)})
- (\protect \ref{(4)})}}

Let us consider the nonlinear operator $T$ defined in $C^{0}([-1,1])$ by the
formula :%
\begin{equation*}
T\text{ }(f\text{ })(x):=\int_{d}^{x}a(t\text{ })P(f\text{ }(\psi (t\text{ }%
)))dt+\int_{d}^{x}b\text{ }(t\text{ })dt+c\text{ },\text{ }x\in \lbrack -1,1]
\end{equation*}%
We have for all $f\in \overline{\Delta }_{\infty }(0,r_{0}):$%
\begin{eqnarray*}
\Vert T(f\text{ })\Vert _{\infty ,[-1,1]} &\leq &\Vert a\Vert _{1}\Vert
P\circ f\text{ }\circ \psi \Vert _{\infty ,[-1,1]}+\Vert b\Vert _{1}+|c\text{
}| \\
&\leq &\Vert a\Vert _{1}\Vert \underline{P}\circ f\text{ }\circ \psi \Vert
_{\infty ,[-1,1]}+\Vert b\Vert _{1}+|c\text{ }| \\
&\leq &\Vert a\Vert _{1}\underline{P}\text{ }(r_{0})+\Vert b\Vert _{1}+|c%
\text{ }|=r_{0}
\end{eqnarray*}%
Thence the closed ball $\overline{\Delta }_{\infty }(0,r_{0})$ is stable by
the operator $T$. On the other hand according to the remark 2.1, we have,
for all $u$ $\in $ $\overline{\Delta }_{\infty }(0,r_{0})$ and $x_{1},$ $%
x_{2}\in \lbrack -1,1]:$%
\begin{eqnarray*}
||T\text{ }(v)-T\text{ }(u)||_{\infty ,[-1,1]} &\leq &(||a\text{ }||_{1}||%
\text{ }(P\circ v\circ \psi )-(P\circ u\circ \psi )||_{\infty ,[-1,1]} \\
&\leq &(||a\text{ }||_{1}||\text{ }P^{\prime }||_{\infty
,[-r_{0},r_{0}])}||(v\circ \psi )-(u\circ \psi )||_{\infty ,[-1,1]} \\
&\leq &||a\text{ }||_{1}\text{ }\underline{P}^{\prime
}(r_{0})||v-u||_{\infty ,[-1,1]}
\end{eqnarray*}%
Since $0<r_{0}<\theta (P)$ it follows that $0<||a$ $||_{1}$ $\underline{P}%
^{\prime }(r_{0})<||a$ $||_{1}$ $\underline{P}^{\prime }(\theta (P))=1.$
Thence the operator $T$ $|_{\overline{\Delta }_{\infty }(0,r_{0})}$ is
lipshitzian with Lipshitz constant $\Vert a\Vert _{1}\underline{P}^{\text{ }%
\prime }(r_{0})\in \lbrack 0,1[.$ It follows that the operator $T$ \ has\ a
unique fixed point $u_{d,c}$ $\in $ $\overline{\Delta }_{\infty }(r_{0}).$
Thence $u_{d,c}$ is a solution in $C^{\infty }([-1,1])$ of the problem (\ref%
{(3)}) - (\ref{(4)}).

\subsection{\textbf{Proof that }$u_{d,c}$\textbf{\ belongs to the Gevrey
class }$G_{k}([-1,1])$}

Let us consider the sequence of functions $(f_{n})_{n\in \text{ }%
\mathbb{N}
^{\ast }}$ defined on $[-1,1]$ by the following relations :%
\begin{equation*}
f_{1}:=\underline{0}_{[-1,1]},\text{ }f_{n+1}=T(f_{n})
\end{equation*}%
This sequence is well defined and we have :%
\begin{equation*}
f_{n}\in \overline{\Delta }_{\infty }(r_{0}),\text{ }n\in 
\mathbb{N}
^{\ast }
\end{equation*}%
Furthermore $(f_{n})_{n\in \text{ }%
\mathbb{N}
^{\ast }}$ is uniformly convergent on $[-1,1]$ to $u_{d,c}$. On the other
hand let us consider the sequence of functions $(F_{n})_{n\in 
\mathbb{N}
^{\ast }}$of functions, $F_{n}:[-1,1]_{k,\nu ,n}\rightarrow 
\mathbb{C}
$ defined by the following relations :%
\begin{eqnarray*}
F_{1} &:&=\underline{0}_{[-1,1]_{k,,\nu ,1}} \\
F_{n+1}(z) &:&=c+\underset{\underrightarrow{d,z}}{\int }\ a(\zeta
)P(F_{n}(\psi (\zeta ))d\zeta +\underset{\underrightarrow{d,z}}{\int }%
b(\zeta )d\zeta ,\text{ }z\in \ [-1,1]_{k,\nu ,n+1},n\in 
\mathbb{N}
^{\ast }
\end{eqnarray*}%
where $\nu \in ]0,\mu \wedge \tau _{\psi }[$ is choosen, as in remark 2.1,
such that :%
\begin{equation*}
\psi ([-1,1]_{k,\nu ,n+1})\subset \lbrack -1,1]_{k,\nu ,n},\text{ }n\in 
\mathbb{N}
^{\ast }
\end{equation*}%
Direct computations show that the function $F_{n}$ is for every $n\in 
\mathbb{N}
^{\ast }$ well defined and holomorphic on $[-1,1]_{k,\nu ,n}$. Furthermore $%
F_{n}$ is for every $n\in 
\mathbb{N}
^{\ast }$ an extension to $[-1,1]_{k,\nu ,n}$ of the function $f_{n}.$ We
associate to every number $s\in \lbrack 0,\frac{\nu }{2}]$ the recurrent
numerical sequence $\omega (s):=(\omega _{n,s})_{n\geq 1}$ defined by the
relations :%
\begin{eqnarray*}
\omega _{1,s} &:&=1 \\
\omega _{n+1,s} &:&=||a||_{\infty ,[-1,1]_{\frac{\mu }{2}}}\underline{P}%
(r_{0}+sn^{\frac{-1}{k}}\omega _{n,s})+||b+P(0)a||_{\infty ,[-1,1]_{\frac{%
\nu }{2}}},\text{ }n\in 
\mathbb{N}
^{\ast }
\end{eqnarray*}%
Direct computations show the existence of a constant $C>(\frac{2}{\nu })\vee
1$ which depends only on $(a,b,P)$ and satisfies the following estimate for
every $s\in \lbrack 0,\frac{\nu }{2}]$ and $n\geq 1:$%
\begin{equation*}
||a||_{\infty ,[-1,1]_{\frac{\mu }{2}}}\underline{P}(r_{0}+sn^{\frac{-1}{k}%
}\omega _{n,s})+||b+P(0)a||_{\infty ,[-1,1]_{\frac{\mu }{2}}}]\leq C[(sn^{%
\frac{-1}{k}}\omega _{n,s})\vee 1]^{N_{0}}
\end{equation*}%
We observe that $\frac{1}{C}\in ]0,(\frac{\nu }{2})\wedge \tau _{\psi }[.$
An easy induction on $n\geq 1$ shows then that we have for all $s\in ]0,%
\frac{1}{C}]:$%
\begin{equation}
0<\omega _{n,s}\leq C\text{ },\text{ }n\in 
\mathbb{N}
^{\ast }  \label{(5)}
\end{equation}%
On the other hand according to the remark 2.2, there exists a real $s_{1}\in
]0,\frac{1}{C}[$ such that :%
\begin{equation}
\psi ([-1,1]_{k,s_{1},n+1})\subset \lbrack -1,1]_{k,s_{1},n}\text{ },\text{ }%
n\in 
\mathbb{N}
^{\ast }  \label{(6)}
\end{equation}

\begin{proposition}
For such a choice of the number $s_{1}$, we have for every $n\in 
\mathbb{N}
^{\ast }:$%
\begin{equation*}
F_{n}([-1,1]_{k,s_{1},n})\subset \lbrack -r_{0},r_{0}]_{_{k,Cs_{1},n}}
\end{equation*}
\end{proposition}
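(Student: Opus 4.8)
The plan is to prove, by induction on $n$, the sharper $\omega$-weighted estimate
\[
\varrho(F_n(z),[-r_0,r_0])\leq s_1 n^{-\frac{1}{k}}\,\omega_{n,s_1}\text{ ,}\quad z\in[-1,1]_{k,s_1,n}\text{ ,}
\]
and then to read off the stated inclusion from the uniform bound $\omega_{n,s_1}\leq C$ of estimate (\ref{(5)}), which applies since $s_1\in\,]0,\tfrac{1}{C}[$. Indeed, because $[-r_0,r_0]_{k,Cs_1,n}=[-r_0,r_0]+B(0,Cs_1 n^{-1/k})$, membership in this (open) set is exactly the assertion $\varrho(\cdot,[-r_0,r_0])<Cs_1 n^{-1/k}$; the strict inequality will come from the fact that $z$ lies in the \emph{open} neighbourhood, so the relevant path length is strictly less than $s_1 n^{-1/k}$.

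For the base case $n=1$ the claim is immediate since $F_1\equiv 0\in[-r_0,r_0]$. For the inductive step the key device is to compare $F_{n+1}(z)$ with its value at the nearest real point $\widehat z\in[-1,1]$. First I would note that the integrand $g_n(\zeta):=a(\zeta)P(F_n(\psi(\zeta)))+b(\zeta)$ is holomorphic on the \emph{convex} set $[-1,1]_{k,s_1,n+1}$: here property (\ref{(6)}), namely $\psi([-1,1]_{k,s_1,n+1})\subset[-1,1]_{k,s_1,n}$, guarantees that $F_n\circ\psi$ is defined and holomorphic there (using $s_1<\nu$ so that the argument stays in the domain of $F_n$). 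Convexity lets me apply Cauchy's theorem to the triangle with vertices $d,\widehat z,z$, so that, since $F_{n+1}(\widehat z)=f_{n+1}(\widehat z)\in[-r_0,r_0]$,
\[
F_{n+1}(z)-F_{n+1}(\widehat z)=\int_{\underrightarrow{\widehat z,z}}g_n(\zeta)\,d\zeta\text{ ,}\qquad \varrho(F_{n+1}(z),[-r_0,r_0])\leq|z-\widehat z|\sup_{\zeta\in\underrightarrow{\widehat z,z}}|g_n(\zeta)|\text{ ,}
\]
with $|z-\widehat z|=\varrho(z,[-1,1])<s_1(n+1)^{-1/k}$.

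The heart of the matter is to recognise that $\sup|g_n|$ is bounded by $\omega_{n+1,s_1}$. Writing $a(\zeta)P(w)+b(\zeta)=a(\zeta)\sum_{j=1}^{N_0}a_j w^j+(b(\zeta)+P(0)a(\zeta))$ and applying the triangle inequality gives $|g_n(\zeta)|\leq|a(\zeta)|\,\underline{P}(|F_n(\psi(\zeta))|)+|b(\zeta)+P(0)a(\zeta)|$. For $\zeta$ on the segment one has $\zeta\in[-1,1]_{\nu/2}\subset[-1,1]_{\mu/2}$ (since $s_1<\tfrac{1}{C}<\tfrac{\nu}{2}<\tfrac{\mu}{2}$), while (\ref{(6)}) together with the induction hypothesis yields $|F_n(\psi(\zeta))|\leq r_0+\varrho(F_n(\psi(\zeta)),[-r_0,r_0])\leq r_0+s_1 n^{-1/k}\omega_{n,s_1}$. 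The monotonicity of $\underline{P}$ on $\mathbb{R}^{+}$ then gives $\underline{P}(|F_n(\psi(\zeta))|)\leq\underline{P}(r_0+s_1 n^{-1/k}\omega_{n,s_1})$, and bounding $|a|$ and $|b+P(0)a|$ by their sup‑norms reproduces \emph{exactly} the recursion defining $\omega_{n+1,s_1}$. Hence $\varrho(F_{n+1}(z),[-r_0,r_0])\leq s_1(n+1)^{-1/k}\omega_{n+1,s_1}$, closing the induction; replacing $\omega_{n,s_1}$ by $C$ (and keeping the strict bound on $|z-\widehat z|$) delivers the proposition.

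I expect the main obstacle to be the bookkeeping that makes the Cauchy deformation legitimate: confirming that $[-1,1]_{k,s_1,n+1}$ is convex (a Minkowski sum of convex sets) so the triangle $d,\widehat z,z$ stays inside it, checking via (\ref{(6)}) that $\psi$ keeps the argument of $F_n$ in its domain at \emph{every} level, and verifying that the nested sup‑norm neighbourhoods line up so that the resulting estimate on $|g_n|$ lands precisely on the defining recursion for $\omega$. Once that deformation is justified, the remainder is a direct transport of the induction hypothesis through the monotone polynomial $\underline{P}$.
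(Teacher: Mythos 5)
Your proposal is correct and follows essentially the same route as the paper: the same induction on the weighted estimate $\varrho(F_{n}(z),[-r_{0},r_{0}])\leq \omega _{n,s_{1}}s_{1}n^{-1/k}$, the same comparison of $F_{n+1}(z)$ with $F_{n+1}(\widehat{z})$ along $\underrightarrow{\widehat{z},z}$, the same splitting of the integrand into $a\cdot (P-P(0))$ and $b+P(0)a$ feeding the recursion for $\omega _{n+1,s_{1}}$, and the same final appeal to the bound $\omega _{n,s_{1}}\leq C$ from (\ref{(5)}). Your explicit justification of the path deformation via convexity, and your attention to the strict inequality needed for membership in the open neighbourhood, are minor refinements of details the paper leaves implicit.
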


\begin{proof}
Let us prove by induction that for all $n\in 
\mathbb{N}
^{\ast }$ the following inclusion holds%
\begin{equation*}
F_{n}([-1,1]_{k,s_{1},n})\subset \lbrack -r_{0},r_{0}]_{\omega
_{n,s_{1}}s_{1}n^{\frac{-1}{k}}}
\end{equation*}%
We denote the last inclusion by $\tciFourier (n).$ We have obviously :%
\begin{equation*}
F_{1}([-1,1]_{k,s_{1},n})\subset \lbrack -r_{0},r_{0}]_{\omega
_{1,s_{1}}s_{1}}
\end{equation*}%
Thence $\tciFourier (1)$ is true. Let us assume for a certain $n\in 
\mathbb{N}
^{\ast }$ that the inclusions $\tciFourier (p)$ are true for all $p\in 
\overline{1,n\text{ }}.$ We have for each $z\in \lbrack -1,1]_{k,s_{1},n+1}:$
\begin{eqnarray*}
&&\varrho (F_{n+1}(z),[-r_{0},r_{0}]) \\
&\leq &|F_{n+1}(z)-F_{n+1}(\widehat{z})| \\
&\leq &\left \vert \underset{\underrightarrow{\widehat{z},z}}{\int }\
a(\zeta )(P(F_{n}(\psi (\zeta ))-P(0))d\zeta \right \vert + \\
&&+\left \vert \underset{\underrightarrow{\widehat{z},z}}{\int }(b(\zeta
)+P(0)a(\zeta )d\zeta \right \vert \\
&\leq &\left( \underset{\underrightarrow{\widehat{z},z}}{\int }|a(\zeta
)|.|d\zeta |\right) ||P-P(0)||_{\infty ,F_{n}(\psi ([-1,1]_{k,s_{1},n+1}))}+
\\
&&+\underset{\underrightarrow{\widehat{z},z}}{\int }|b(\zeta )+P(0)a(\zeta
)|.|d\zeta |
\end{eqnarray*}%
It follows from (\ref{(6)}) that%
\begin{eqnarray*}
&&\varrho (F_{n+1}(z),[-r_{0},r_{0}]) \\
&\leq &\left( \underset{\underrightarrow{\widehat{z},z}}{\int }|a(\zeta
)||d\zeta |\right) ||P-P(0)||_{\infty ,F_{n}([-1,1]_{k,s_{1},n})}+ \\
&&+\underset{\underrightarrow{\widehat{z},z}}{\int }|b(\zeta )+P(0)a(\zeta
)||d\zeta | \\
&\leq &\left[ 
\begin{array}{c}
||a||_{\infty ,[-1,1]_{s}}\underline{P}(r_{0}+\omega _{n,s_{1}}s_{1}n^{\frac{%
-1}{k}})+ \\ 
+||b+P(0)a||_{\infty ,[-1,1]_{s}}%
\end{array}%
\right] \cdot \\
&&\cdot s_{1}(n+1)^{\frac{-1}{k}} \\
&\leq &\omega _{n+1,s_{1}}s_{1}(n+1)^{\frac{-1}{k}}
\end{eqnarray*}%
Thence $\tciFourier (n+1)$ is true. Consequently $\tciFourier (n)$ is true
for all $n\in 
\mathbb{N}
^{\ast }.$It follows from (\ref{(5)}) that we have for all $n\in 
\mathbb{N}
^{\ast }:$%
\begin{equation*}
\  \ F_{n}([-1,1]_{k,s_{1},n})\subset \lbrack -r_{0},r_{0}]_{_{k,Cs_{1},n}}
\end{equation*}%
The proof of the proposition is then complete.
\end{proof}

End of te proof of the main result :

The function $\Lambda $ defined on $[0,\mu \lbrack $\  \ by the relations :%
\begin{eqnarray*}
\Lambda (s) &:&=\underset{z\in \lbrack |1,1]_{s}}{\sup }\left( \underset{%
\underrightarrow{d,z}}{\int }|a(\zeta )|.|d\zeta |\right) .\underline{P}^{%
\text{ }\prime }(r_{0}+Cs),\text{ }s\in ]0,\mu \lbrack \\
\Lambda (0) &:&=\underset{x\in \lbrack -1,1]}{\sup }\left( \underset{d\wedge
x}{\overset{d\vee x}{\int }}|a(t)|dt\right) .\underline{P}^{\text{ }\prime
}(r_{0})
\end{eqnarray*}%
is continuous on $[0,\mu \lbrack $ and we have $\Lambda (0)\in \lbrack 0,1[.$
Thence, according to proposition 3, there exists a real $s_{2}\in
]0,s_{1}\wedge (\frac{\nu }{2C})[$ such that :%
\begin{equation}
\Lambda (Cs_{2})<1  \label{(7)}
\end{equation}%
\begin{equation}
F_{n}([-1,1]_{k,s_{2},n})\subset \lbrack -r_{0},r_{0}]_{_{k,Cs_{2},n}},\text{
}n\in 
\mathbb{N}
^{\ast }  \label{(8)}
\end{equation}%
It follows from (\ref{(8)}) that we have for all $n\in 
\mathbb{N}
^{\ast }\backslash \{1\}$ and $z\in \lbrack -1,1]_{k,s_{2},n+1}:$%
\begin{eqnarray*}
&&|F_{n+1}(z)-F_{n}(z)| \\
&\leq &\underset{\underrightarrow{d,z}}{\int }|a(\zeta )|.|P(F_{n}(\psi
(\zeta ))-P(F_{n-1}(\psi (\zeta ))|.|d\zeta |) \\
&\leq &\left( \underset{\underrightarrow{d,z}}{\int }|a(\zeta )|.|d\zeta
|\right) ||P\text{ }^{\prime }||_{\infty
,J_{k,Cs_{2},n}}||F_{n}-F_{n-1}||[-1,1]_{k,s_{2},n+1} \\
&\leq &\left( \underset{\underrightarrow{d,z}}{\int }|a(\zeta )|.|d\zeta
|\right) )\underline{P^{\prime }}%
(r_{0}+Cs_{2})||F_{n}-F_{n-1}||_{[-1,1]_{k,s_{2},n}} \\
&\leq &\Lambda (Cs_{2})||F_{n}-F_{n-1}||_{\infty ,[-1,1]_{k,s_{2},n}}
\end{eqnarray*}%
It follows that :%
\begin{equation*}
||F_{n+1}-F_{n}||_{\infty ,[-1,1]_{k,s_{2},n+1}}\leq \Lambda
(Cs_{2})||F_{n}-F_{n-1}||_{\infty ,[-1,1]_{k,s_{2},n}}\text{ },\text{ }n\in 
\mathbb{N}
^{\ast }\backslash \{1\}
\end{equation*}%
Thence we have : 
\begin{equation*}
||F_{n+1}-F_{n}||_{\infty ,[-1,1]_{k,s_{2},n+1}}\leq (\Lambda (Cs_{2}))^{n}%
\frac{||F_{n+1}-F_{n}||_{\infty ,[-1,1]_{k,s_{2},2}}}{\Lambda (Cs_{2})}\text{
},\text{ }n\in 
\mathbb{N}
^{\ast }
\end{equation*}%
\ Let us set : 
\begin{equation*}
\left \{ 
\begin{array}{c}
g_{0}:=F_{1} \\ 
g_{n}:=F_{n+1}-F_{n}\text{ , }n\in 
\mathbb{N}
^{\ast } \\ 
C_{0}:=\frac{||F_{2}-F_{1}||_{\infty ,[-1,1]_{k,s_{2},2}}}{\Lambda \ (Cs_{2})%
} \\ 
\delta :=\Lambda \ (Cs_{2})%
\end{array}%
\right.
\end{equation*}%
Then we have for all $n\in 
\mathbb{N}
:$%
\begin{equation*}
\left \{ 
\begin{array}{c}
g_{n}\in O\text{ }([-1,1]_{k,s_{2},n+1}) \\ 
g_{n}|_{[-1,1]}=f_{n+1}-f_{n}\text{ , }n\in 
\mathbb{N}
^{\ast } \\ 
||g_{n}||_{_{\infty ,[-1,1]_{k,s_{2},n+1}}}\leq C_{0}\text{ }\delta
^{n},n\in 
\mathbb{N}
^{\ast }%
\end{array}%
\right.
\end{equation*}%
On the other hand in view of Cauchy's in\'{e}qualities we have the following
estimates : \ 
\begin{equation*}
||g_{n}^{(p)}|_{[-1,1]}||_{\infty ,[-1,1]}\leq C_{0}p!\left( \frac{s_{2}n^{%
\frac{-1}{k}}}{2}\right) ^{-p}\delta ^{n},\text{\  \ }p\in 
\mathbb{N}
,\text{ }n\in 
\mathbb{N}
^{\ast }
\end{equation*}%
It follows that :%
\begin{equation}
||g_{n}^{(p)}|_{[-1,1]}||_{\infty ,[-1,1]}\leq C_{0}(\frac{2}{s_{2}})^{p}%
\underset{x\text{ }\geq 0}{\sup }(\sqrt{\delta }^{x}x^{\frac{p}{k}}).\sqrt{%
\delta }^{n}p^{p},\text{ \ }p\in 
\mathbb{N}
,\text{ }n\in 
\mathbb{N}
^{\ast }  \label{(9)}
\end{equation}%
Thanks to (\ref{(7)}) we have\ $\delta \in \lbrack 0,1[$. Thence we can show
by directs computations that : 
\begin{equation*}
\underset{x\text{ }\geq 0}{\sup }(\sqrt{\delta }^{x}x^{\frac{p}{k}})=\left( 
\frac{2^{\frac{1}{k}}}{(ek\ln (\frac{1}{\delta }))^{\frac{1}{k}}}\right)
^{p}p^{\frac{p}{k}},\text{ }p\in 
\mathbb{N}%
\end{equation*}%
The estimate (\ref{(9)}) becomes then :%
\begin{equation}
||g_{n}^{(p)}|_{[-1,1]}||_{\infty ,[-1,1]}\leq C_{0}\left( \frac{2^{\frac{1+k%
}{k}}}{s_{2}(ek\ln (\frac{1}{\delta }))^{\frac{1}{k}}}\right) ^{p}(\sqrt{%
\delta })^{n}p^{p(1+\frac{1}{k})},\text{ \ }p\in 
\mathbb{N}
,\text{ }n\in 
\mathbb{N}
^{\ast }  \label{(10)}
\end{equation}%
\ Consequently the function series $\sum g_{n}^{(p)}|_{[-1,1]}$ is for every 
$p\in 
\mathbb{N}
$ uniformly convergent on $[-1,1].$We already knew that\ the function series 
$\sum g_{n}|_{[-1,1]}$ is uniformly convergent on $[-1,1]$ to the function $%
u_{d,c}$.\ It follows\ then that :%
\begin{equation*}
\  \Vert (u_{d,c}{}-f_{1})^{(p)}\Vert _{\infty ,[-1,1]}\leq \frac{C_{0}\sqrt{%
\delta }}{1-\sqrt{\delta }}\left( \frac{2^{\frac{1+k}{k}}}{s_{2}(ek\ln (%
\frac{1}{\delta }))^{\frac{1}{k}}}\right) ^{p}p^{p(1+\frac{1}{k})},\text{ \ }%
p\in 
\mathbb{N}
\ 
\end{equation*}%
Since $f_{1}$ is real-analytic on $[-1,1]$, it follows that $u_{d,c}$
belongs to the Gevrey class $G_{k}([-1,1]).$

The proof of the main result is then complete.

\section{Examples}

\begin{proposition}
The function $\sin $ verifies the $E(1)$ property.
\end{proposition}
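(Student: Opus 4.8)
The plan is to unwind the definition of the $E(1)$ property directly. Since $\sin$ is entire, it is holomorphic on every neighborhood $[-1,1]_{r}$, so I am free to search for a $1$-threshold $\tau$ without worrying about the domain. Recalling that $[-1,1]_{1,A,n}=[-1,1]+B(0,\tfrac{A}{n})$, the goal is to produce $\tau>0$ so that for each $A\in \left]0,\tau\right]$ there is an integer $N(A)\geq 1$ with $\sin\left([-1,1]_{1,A,p+1}\right)\subset [-1,1]_{1,A,p}$ for every $p\geq N(A)$. The single resource I intend to exploit is the strict margin $\sin 1<1$: since $\sin\left([-1,1]\right)\subset [-\sin 1,\sin 1]$, the image of the real interval sits \emph{strictly} inside $[-1,1]$, and this slack is what will absorb the small expansion coming from the thin complex collar.

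For the main estimate I would fix $z\in [-1,1]_{1,A,p+1}$, write $z=\sigma+it$ with $\sigma,t\in \mathbb{R}$, and let $\widehat{z}\in [-1,1]$ be its nearest point, so that both $|t|$ and $|\sigma-\widehat{z}|$ are at most $|z-\widehat{z}|<\tfrac{A}{p+1}$. The key is the splitting
\[
\sin(\sigma+it)=\sin\sigma\,\cosh t+i\,\cos\sigma\,\sinh t,
\]
which separates the two coordinates. For $\tau$ smaller than $\tfrac{\pi}{2}-1$ one has $\sigma\in \left]-1-A,1+A\right[\subset \left]-\tfrac{\pi}{2},\tfrac{\pi}{2}\right[$, hence $|\operatorname{Re}\sin z|=|\sin\sigma|\cosh t\leq \sin(1+A)\cosh A$. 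Shrinking $\tau$ further so that $\sin(1+\tau)\cosh\tau\leq 1$ --- legitimate because this quantity tends to $\sin 1<1$ as $\tau\to 0^{+}$ --- forces $\operatorname{Re}\sin z\in [-1,1]$. Then $\operatorname{Re}\sin z$ is itself a point of the segment, so the distance of $\sin z$ to $[-1,1]$ is governed purely by the imaginary part: $\varrho(\sin z,[-1,1])\leq |\operatorname{Im}\sin z|=|\cos\sigma|\,|\sinh t|\leq \sinh|t|<\sinh\!\left(\tfrac{A}{p+1}\right)$.

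It then remains to close the inequality $\sinh\!\left(\tfrac{A}{p+1}\right)<\tfrac{A}{p}$. Here I would use that $\tfrac{A}{p+1}<\tfrac{A}{p}$ already, with gap $\tfrac{A}{p}-\tfrac{A}{p+1}=\tfrac{A}{p(p+1)}$, while the overshoot $\sinh\xi-\xi$ is bounded by $\tfrac13\xi^{3}$ for small $\xi$. Comparing the two shows the gap dominates the overshoot once $\tau$ is small, since with $\xi=\tfrac{A}{p+1}$ the condition reduces to $\tfrac{A^{2}p}{3(p+1)^{2}}\leq 1$, which holds uniformly in $p\geq 1$ because $\tfrac{p}{(p+1)^{2}}\leq \tfrac14$. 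Consequently the inclusion holds for \emph{every} $p\geq 1$, so $N(A)=1$ suffices and $\tau$ is a $1$-threshold for $\sin$.

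The step I expect to be the real obstacle is precisely the one the splitting is designed to avoid: a naive Lipschitz bound $|\sin z-\sin \widehat{z}|\leq |z-\widehat{z}|\,\|\cos\|_{\infty}$ is useless here, since $\|\cos\|_{\infty}\geq 1$ on any collar while the available contraction factor $\tfrac{p}{p+1}$ tends to $1$; no uniform expansion, however slight, can be tolerated for large $p$. The resolution is the anisotropy of $\sin$ near the real axis --- the transverse growth is carried only by $\cos\sigma\,\sinh t$, which stays bounded by $\sinh|t|\leq \sinh\tfrac{A}{p+1}$, while the potentially dangerous longitudinal term $\sin\sigma\,\cosh t$ is pinned inside $[-1,1]$ by the margin $1-\sin 1$. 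Getting these two observations to cooperate, rather than grinding a single modulus-of-continuity estimate, is the crux of the argument.
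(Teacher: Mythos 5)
Your proof is correct, but it follows a genuinely different route from the paper's. The paper Taylor-expands $\sin$ at the nearest real point $\widehat z$: since every derivative of $\sin$ has modulus at most $1$ on $\mathbb{R}$ and $\sin\widehat z\in[-1,1]$, it bounds
$\varrho(\sin z,[-1,1])\le|\sin z-\sin\widehat z|\le\sum_{j\ge1}\varrho(z,[-1,1])^{j}<\frac{A/(p+1)}{1-A/(p+1)}=\frac{A}{p+1-A}<\frac{A}{p}$,
the last inequality holding precisely because $A<1$. So the paper's slack comes from restricting $A$ to $]0,1[$, not from any margin intrinsic to $\sin$; the same computation would work for any entire function whose derivatives are all bounded by $1$ on the reals and which maps $[-1,1]$ into itself. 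You instead exploit the anisotropy of $\sin(\sigma+it)=\sin\sigma\cosh t+i\cos\sigma\sinh t$ together with the strict margin $\sin 1<1$ to pin $\operatorname{Re}\sin z$ inside $[-1,1]$, reduce the distance to $|\operatorname{Im}\sin z|\le\sinh|t|<\sinh\bigl(\tfrac{A}{p+1}\bigr)$, and close with $\sinh\xi\le\xi+\tfrac13\xi^{3}$ and $\tfrac{p}{(p+1)^{2}}\le\tfrac14$. Every step checks out (in particular $|t|\le|z-\widehat z|=\varrho(z,[-1,1])<\tfrac{A}{p+1}$, and $A\mapsto\sin(1+A)\cosh A$ is increasing on the relevant range), and both arguments give $N(A)=1$. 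What you pay is a smaller, implicitly constrained threshold (you need $1+\tau<\tfrac{\pi}{2}$ and $\sin(1+\tau)\cosh\tau\le1$) against the paper's clean $\tau<1$; what you gain is a sharper structural explanation of why the collar contracts --- the transverse growth near the real axis is only of order $\sinh|t|$ --- whereas the paper's geometric-series bound is blunter but shorter and less tied to the specific function. Your criticism of the naive one-term Lipschitz bound is also apt, and it is worth noting that the paper evades it not by your real/imaginary splitting but by keeping the whole Taylor tail and cashing in the factor $\frac{1}{1-A/(p+1)}$ against the gap between $\tfrac{A}{p+1}$ and $\tfrac{A}{p}$.
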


\begin{proof}
Let $A\in ]0,1[$, $p\in 
\mathbb{N}
^{\ast }$and $z\in \lbrack -1,1]_{1,p+1,A}.$ We have the following
inequalities :%
\begin{eqnarray*}
\varrho (\sin z,[-1,1]) &\leq &|\sin z-\sin \widehat{z}| \\
&\leq &\overset{+\infty }{\underset{j=1}{\sum }}\frac{|\sin ^{(j)}(\widehat{z%
}))|}{j!}|z-\widehat{z}|^{j} \\
&\leq &\overset{+\infty }{\underset{j=1}{\sum }}\varrho (z,[-1,1])^{j} \\
&\leq &\frac{A}{p+1-A} \\
&<&\frac{A}{p}
\end{eqnarray*}%
It follows that :%
\begin{equation*}
\sin ([-1,1]_{1,p+1,A})\subset \lbrack -1,1]_{1,p,A}\text{ },p\in 
\mathbb{N}
^{\ast }
\end{equation*}%
Since the function $\sin $ is entire it follows that $\sin $ verifies the $%
E(1)$ property.
\end{proof}

\begin{example}
\textbf{\ }
\end{example}

Let $\alpha \in 
\mathbb{R}
^{\ast },\beta >0,\gamma \in 
\mathbb{R}
^{\ast }$ and $N\in 
\mathbb{N}
$ be real constants. We assume that :%
\begin{equation}
\frac{48\beta ^{2}|\alpha |}{\gamma ^{2}}<N+1  \label{(11)}
\end{equation}%
Let us consider the FDE :%
\begin{equation}
y^{\prime }(t)=\alpha t^{N}(y(\sin t))^{3}+\beta \cosh \gamma t  \label{(12)}
\end{equation}%
with the initial condition :%
\begin{equation}
y(0)=0  \label{(13)}
\end{equation}%
Consider then the functions :%
\begin{equation*}
P_{1}:z\mapsto z^{3},a_{1}:z\mapsto \alpha z^{N},b_{1}:z\mapsto \beta \cosh
\gamma z
\end{equation*}%
We have :%
\begin{eqnarray*}
\theta (P_{1},a_{1}) &=&\sqrt{\frac{N+1}{6|\alpha |}} \\
\theta (P_{1},a_{1})\mathit{\ }-\frac{\underline{P}^{\text{ }}(\theta
(P_{1},a_{1}))}{\underline{P}^{\text{ }\prime }(\theta (P_{1},a_{1}))} &=&%
\sqrt{\frac{N+1}{24|\alpha |}}
\end{eqnarray*}%
We have also :%
\begin{equation*}
||b_{1}+P\mathit{\ }(0)a_{1}||_{1}+|0|=\frac{2\beta }{\gamma }\sinh \gamma <%
\sqrt{\frac{N+1}{24|\alpha |}}
\end{equation*}%
It follows that :%
\begin{equation*}
||b_{1}+P\mathit{\ }(0)a_{1}||_{1}+|0|<\theta (P_{1},a_{1})\mathit{\ }-\frac{%
\underline{P}^{\text{ }}(\theta (P_{1},a_{1}))}{\underline{P}^{\text{ }%
\prime }(\theta (P_{1},a_{1}))}
\end{equation*}%
Thence according to the proposition $5.1$ and to the main result, it follows
that the problem (\ref{(12)}) - (\ref{(13)}) has a solution which belongs to
the Gevrey classe $G_{1}([-1,1]).$

\begin{example}
\end{example}

Consider the FDE :%
\begin{equation}
y^{\prime }(t)=2\ln 2.2^{t}(y(\sin t)^{4}-y(\sin t)^{2}+\frac{1}{8}y(\sin
t)-1)+\frac{9\ln 2}{500}.2^{t}  \label{(14)}
\end{equation}%
with the initial condition :%
\begin{equation}
\text{\ }y(0)=\frac{1}{100}  \label{(15)}
\end{equation}%
Consider the functions :%
\begin{equation*}
P_{2}:z\mapsto z^{4}-z^{2}+\frac{1}{8}z-1,\text{ }a_{2}:z\mapsto 2\ln
2.2^{z},\text{ }b_{2}:z\mapsto \frac{301\ln 2}{150}2^{z}
\end{equation*}%
We have :%
\begin{equation*}
||a||_{1}\underline{P_{2}^{\prime }}(0)=\frac{3}{8}<1
\end{equation*}%
Thence the equation $||a||_{1}\underline{P_{2}^{\prime }}(t)=1$ has a unique
positive root $\theta (P_{2},a_{2}).$Moreover, the computer algebra system
Maple provides the following continued inequalities :%
\begin{eqnarray*}
0,1020416497 &<&\theta (P_{2},a_{2})<0,1020416498 \\
0,0289635672 &<&\theta (P_{2},a_{2})\mathit{\ }-\frac{\underline{P}^{\text{ }%
}(\theta (P_{2},a_{2}))}{\underline{P}^{\text{ }\prime }(\theta
(P_{2},a_{2}))}<0,0289635673
\end{eqnarray*}%
We have also :%
\begin{equation*}
\left \Vert b_{2}+P_{2}(0)a_{2}\right \Vert _{1}+\left \vert \frac{1}{100}%
\right \vert =0,02
\end{equation*}%
Thence : 
\begin{equation*}
\left \Vert b_{2}+P_{2}(0)a_{2}\right \Vert _{1}+\left \vert \frac{1}{100}%
\right \vert <\theta (P_{2},a_{2})\mathit{\ }-\frac{\underline{P}^{\text{ }%
}(\theta (P_{2},a_{2}))}{\underline{P}^{\text{ }\prime }(\theta
(P_{2},a_{2}))}
\end{equation*}%
It follows, according to the proposition $5.1$ and to the main result, that
the problem (\ref{(14)}) - (\ref{(15)}) has a solution which belongs to the
Gevrey class $G_{1}([-1,1]).$

\bigskip

\end{document}